\documentclass[12pt,reqno]{amsart}
\setlength{\hoffset}{-1.7cm}
\setlength{\voffset}{-1cm}
\setlength{\textheight}{22cm} \setlength{\textwidth}{16cm}
\usepackage{amssymb,epsf}
\usepackage{amstext,amsmath,amsfonts,amscd,amsthm,graphicx}
\usepackage{upref}
\usepackage{epsfig}
\usepackage[utf8]{inputenc}
\usepackage{latexsym}
\usepackage{eucal}
\usepackage{wrapfig}
\usepackage{color}
\usepackage{enumerate}
\addtolength{\headsep}{3mm}
\addtolength{\footskip}{8mm}
\linespread{1.3}

\theoremstyle{definition}
\newtheorem{df}{Definition}[section]
\theoremstyle{plain}
\newtheorem{thm}{Theorem}[section]
\newtheorem{lemma}{Lemma}[section]

\theoremstyle{definition}

\newtheorem{cexm}{Counterexample}[section]

\begin{document}
\author{Barbora Voln\'{a}}
\address{Mathematical Institute, Silesian University in Opava, \newline \indent Na Rybn\'{i}\v{c}ku 1, 746 01 Opava, Czech Republic}
\email{Barbora.Volna@math.slu.cz}
\keywords{differential inclusion, chaos, fixed points, economic cycle}
\subjclass[2010]{34A60, 54H20, 37D45, 37N40}

\title{On chaotic sets of solutions for a class of differential inclusions on $\mathbb{R}^2$}

\begin{abstract}
We deal with a set of solutions of the continuous multi-valued dynamical systems on $\mathbb{R}^2$ of the form $\dot x \in F(x)$ where $F(x)$ is a set-valued function and $F=\{f_1,f_2\}$. Such dynamical systems are frequently used in mathematical economics. We rectify the sufficient conditions for a set of solutions of this system to exhibit Devaney chaos, $\omega$-chaos and infinite topological entropy from: B.R. Raines, D.R. Stockman, Fixed points imply chaos for a class of differential inclusions that arise in economic models, Trans. American Math. Society 364 (5) (2012), 2479--2492. We significantly improve their results. At the end, we illustrate these problems on our own macroeconomic model.
\end{abstract}

\maketitle

\section{Introduction}

In this paper, we focus on the chaotic behaviour of a set of solutions for particular class of differential inclusions. In general, differential inclusions and dynamical systems given by these differential inclusions are frequently studied, see e.g. \cite{andres_fiser_juttner}, \cite{andres_furst_pastor}, \cite{desheng-li_kloeden}, \cite{desheng-li_xiaoxian-zhang}, \cite{desheng-li_yejuan-wang_suyun-wang}, \cite{obukhovskii_kamenskii_kornev_yeong-cheng-liou}, \cite{smirnov}, and many interesting models of mathematical economics are described by these dynamical systems. The differential inclusion is given by
$$\dot x \in F(x)$$
where $F$ is a set-valued map which associates a set $F(x) \subset \mathbb{R}^n$ to every point $x \in \mathbb{R}^n$ \cite{smirnov}. We consider the particular class of differential inclusions on $\mathbb{R}^2$ with
$$F=\{f_1,f_2\}$$
where $f_{1,2}:\mathbb{R}^2 \rightarrow \mathbb{R}^2$ are $C^1$ functions. The state space $\mathbb{R}^2$ is frequently used in many economic areas where two dimensional models arise, see e.g. \cite{gyurkovics_meyer_takacs}, \cite{ke-ang-fu_chenglong-yu}, \cite{lima} \cite{szydlowski_krawiec}, \cite{yokoo}. The graph of the mapping $F$ can be represented by the union of graphs of the single-valued functions $f_1$ and $f_2$. We can find such a mapping in the context of "backward dynamics" occurring in many economic models, see e.g. \cite{kennedy}, \cite{kennedy_stockman}, \cite{medio_raines}, \cite{palimar_shankar}. In such a multi-valued dynamical system, we can detect the strange property where many different orbits start at the same point from the state space $\mathbb{R}^2$, and also a chaos in a certain sense. 

In this article, we follow up the issues from the paper \cite{raines_stockman} where authors formulated the sufficient conditions for a set of solutions of this differential inclusion to exhibit Devaney chaos, $\omega$-chaos and infinite topological entropy. But we show that these conditions are not sufficient conditions and we reformulate them to be sufficient conditions. Secondly, authors in \cite{raines_stockman} showed that fixed points imply their sufficient conditions. Naturally, we show that fixed points imply the reformulated sufficient conditions. In this paper, we do not deal with Li-Yorke and distributional chaos but the principle is similar. In conclusion, we illustrate these issues on our own macroeconomic equilibrium model with an economic cycle and we demonstrate the mentioned problems on this application in economics.

\section{Main results}
\label{section_main_results} 

In this section, we briefly present our main results. We deal with a set of solutions $D$ of mentioned multi-valued dynamical system. A solution of this differential inclusion is an absolutely continuous function $x: \mathbb{R} \rightarrow \mathbb{R}^2$ such that 
$$\dot x(t) \in F(x(t)) \textrm{ a.e.}$$
where $t \in \mathbb{R}$, \cite{raines_stockman}, \cite{smirnov}. Instead of the action on this differential inclusion, we consider the natural $\mathbb{R}$-action on the set of solutions $D$. So, let
$$T:D \times \mathbb{R} \rightarrow D$$
be the natural $\mathbb{R}$-action on $D$ and $T(x,t)=y=T_t(x)$, $y(s)=x(t+s)$ for all $s \in \mathbb{R}$ \cite{raines_stockman}. Naturally, if $T$ is chaotic on $D$ in a certain sense then the original multi-valued dynamical system $\dot x \in \{ f_1(x),f_2(x) \}$ is considered chaotic in the same sense too.

In order to continue we need to present the Raines-Stockman's sufficient conditions for a set of solutions of this differential inclusion to exhibit Devaney chaos, $\omega$-chaos and infinite topological entropy in the plane region. Let $a,b \in \mathbb{R}^2$. There is a path from $a$ to $b$ generated by $D$ if there exists a solution $v \in D$ and $t_0,t_1 \in \mathbb{R}$ such that $t_0<t_1$ with $v(t_0)=a$ and $v(t_1)=b$ \cite{raines_stockman}. Moreover, if $\dot v$ has finitely many discontinuities on $[t_0,t_1]$ and $a \neq v(t) \neq b$ for all $t_0<t<t_1$ than this path is the simple path \cite{raines_stockman}. The simple path from $a$ to $b$ generated by $D$ is $\{v(t):t_0 \leq t \leq t_1 \} \subseteq \mathbb{R}^2$ oriented in the sense of increasing time and is denoted by $P_{a b}$, or $P_{ab}(t)$ for $t_0 \leq t \leq t_1$, or simply $P$ if it does not cause any confusion. $P_{ab}$ and $P_{ba}$ can be the same subset of $\mathbb{R}^2$ with the reverse orientation. In the following, we use also the notation $P(t_0)=a$ and $P(t_1)=b$. Let $V \subseteq \mathbb{R}^2$ and $V^*=\{x \in D | x(t) \in V \textrm{ for all t} \in \mathbb{R}\}$. Now, we recall the following properties \cite{raines_stockman} denoted by (RS1) and (RS2).
\begin{enumerate}
\item[(RS1)] For every $a,b \in V$ there is a simple path from $a$ to $b$ in $V$ generated by $D$.
\item[(RS2)] There is a solution $w \in D$ such that $w(t) \in V$ for all $t \in \mathbb{R}$ and $\{w(t):t \in \mathbb{R}\}$ is not dense in $V$. 
\end{enumerate}
According to Raines and Stockman \cite{raines_stockman}, (RS1) and (RS2) are the sufficient conditions for $T|_{V^*}$ to exhibit Devaney chaos, and only (RS1) is the sufficient condition for $(D,T)$ to exhibit $\omega$-chaos and infinite topological entropy. Essentially, this means that $V$ can be an arbitrary subset of $\mathbb{R}^2$ with restrictions given by (RS1), or by (RS1) and (RS2), to guarantee chaotic behaviour in our multi-valued dynamical system. There are two problems in such a formulation of the sufficient conditions. Firstly, the set $V$ can not be an arbitrary subset of $\mathbb{R}^2$. And secondly, (RS1), or rather (RS1) and (RS2) are not sufficient conditions because one condition is missing. Thus, our main results are formulated in the following theorems. The proofs are presented in the section \ref{section_proofs}.

\begin{thm}
\label{thm_not_sufficient_conditions}
(RS1) and (RS2) are not sufficient conditions for $T|_{V^*}$ to exhibit Devaney chaos. (RS1) is not sufficient condition for $(D,T)$ to exhibit $\omega$-chaos and for $T$ to have infinite topological entropy.
\end{thm}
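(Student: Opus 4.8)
The statement is negative, so the plan is to disprove sufficiency by exhibiting explicit counterexamples: concrete $C^1$ fields $f_1,f_2:\mathbb{R}^2\to\mathbb{R}^2$ together with a set $V\subseteq\mathbb{R}^2$ for which the Raines--Stockman conditions hold but the asserted chaos fails. The guiding observation is that (RS1) and (RS2) only control \emph{finite-time reachability} inside $V$ and the existence of \emph{one} non-dense trapped orbit; neither controls the global branching structure of the solutions that remain in $V$ for all time, and it is precisely this branching that a time-shift system needs in order to be sensitive, to carry positive entropy, or to be $\omega$-chaotic. My strategy is therefore to engineer examples in which the relevant $\mathbb{R}$-action is \emph{equicontinuous} (indeed consists of periodic orbits), since an equicontinuous system can exhibit none of sensitive dependence, infinite topological entropy, or $\omega$-chaos.

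For the claim that (RS1) alone forces $\omega$-chaos and infinite entropy, I would take the rotation field $f_1=(-x_2,x_1)$ together with a parallel field $f_2=2f_1$ of different speed, and $V=\{x\in\mathbb{R}^2:|x|=1\}$. Every solution is then a circular orbit of fixed radius, hence periodic; from any $a$ to any $b$ on the circle one flows forward along the common direction, producing a simple path in $V$, so (RS1) holds. The delicate part is to establish the \emph{negative} dynamical conclusions rigorously: I would argue that $(D,T)$ is (conjugate to) a family of periodic orbits and is therefore equicontinuous, so that its topological entropy is $0$ rather than infinite and it admits no uncountable $\omega$-scrambled set, directly contradicting the (RS1)-only assertion.

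For the Devaney claim I must retain both (RS1) and (RS2), which is the genuinely delicate case, because (RS2) forces a non-dense trapped orbit and the naive way to produce one --- allowing two opposite directions so that an orbit can reverse --- tends to reintroduce free switching and hence chaos. My plan is to decouple the rich finite-time reachability demanded by (RS1) from the forever-trapped set $V^*$: I would design $f_1,f_2$ on a genuine planar region $V$ so that finite paths can join any two points of $V$ (via excursions that momentarily use both fields yet stay inside $V$), while the only solutions remaining in $V$ for \emph{all} $t$ collapse to a single periodic orbit $\Gamma\subsetneq V$. Then (RS1) holds by construction, (RS2) holds because $\Gamma$ is not dense in $V$, yet $T|_{V^*}$ is conjugate to a rotation on $\Gamma$; being equicontinuous it is not sensitive and hence not Devaney chaotic.

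The main obstacle, and the step I expect to spend the most care on, is the simultaneous verification within one example: proving that every pair of points of $V$ is joined by a \emph{simple} path generated by $D$ (the non-self-intersection and finite-discontinuity requirements must be checked for the connecting solutions) while at the same time proving that no solution other than those on $\Gamma$ can stay in $V$ for all time. I anticipate that the latter will follow from a trapping or monotonicity argument --- arranging that any deviation from $\Gamma$ is eventually driven to the boundary of $V$ and forced to exit --- combined with a Poincar\'e--Bendixson-type restriction on planar recurrence. Once the trapped set is pinned down, the failure of Devaney chaos, of infinite entropy, and of $\omega$-chaos all reduce to the elementary fact that periodic and equicontinuous time-shift systems are neither sensitive nor of positive entropy.
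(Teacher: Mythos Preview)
Your plan has a genuine gap in the first counterexample, and it stems from a misunderstanding of what $D$ is in this paper.  With $f_1=(-x_2,x_1)$ and $f_2=2f_1$, it is true that every solution of the inclusion stays on a fixed circle, but your inference ``hence periodic'' is false: on the unit circle the inclusion reads $\dot\theta\in\{1,2\}$, and an absolutely continuous $\theta$ may switch speeds on an arbitrary measurable set of times.  Such solutions are generically \emph{not} periodic, and the time-shift on the full solution set is certainly not equicontinuous --- two solutions with the same initial point can separate by choosing different speed patterns.  So the assertion that $(D,T)$ is ``conjugate to a family of periodic orbits'' collapses, and with it your zero-entropy and no-$\omega$-chaos conclusions.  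The Devaney plan inherits the same risk: once both fields are genuinely available inside $V$, preventing $V^*$ from containing a rich switching family while still keeping (RS1) is exactly the hard part, and your sketch does not explain how to enforce it.

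The paper sidesteps all of this by exploiting a point you overlook: in this framework $D$ is \emph{a} set of solutions of the inclusion, not the set of \emph{all} solutions.  The authors simply \emph{declare} $D$ to be a single periodic cycle on a segment $U=[c_1,c_2]$ (follow $g_1$ to $c_2$, switch, follow $g_2$ back to $c_1$, repeat) together with all its time-shifts; this $\bar R$ satisfies (RS1) tautologically, yet $(\bar R,\bar T)$ is literally one periodic orbit, so $\omega(x)=\bar R$ for every $x$ and the entropy is zero.  For the Devaney claim they take $\hat R=\bar R\cup X_0$ where $X_0$ is a second periodic cycle on a proper subsegment $U_0\subset U$; this supplies the non-dense trapped orbit required by (RS2), while sensitivity still fails because nearby solutions in $\hat R$ never separate beyond their initial offset.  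The moral is that your instinct --- kill chaos by forcing the trapped dynamics to be a rotation --- is right, but the clean way to realise it here is to restrict $D$ by fiat rather than to engineer $f_1,f_2$ so that the \emph{full} solution set is tame.
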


Now, we rectify Raines-Stockman's sufficient conditions. Firstly, we provide the following limitation on the set $V$ as a subset of $\mathbb{R}^2$.
\begin{center}
$\mathbb{R}^2 \setminus V$ is one unbounded subset of $\mathbb{R}^2$.
\end{center} 
Secondly, we define the concatenation of paths and then we formulate the new condition denoted by (BV3).

\begin{df}
\label{df_concatenation}
Let $a_1,a_2,a_3,\ldots \in \mathbb{R}^2$. Let $P_{a_1 a_2}$ be the path from $a_1$ to $a_2$ generated by $D$, let $P_{a_2 a_3}$ be the path from $a_2$ to $a_3$ generated by $D$, etc. Let $t_{a_1},t_{a_2},t_{a_3}\ldots \in \mathbb{R}$ be such that $t_{a_1}<t_{a_2}<t_{a_3}<\ldots$ with $P_{a_1 a_2}(t_{a_1})=a_1$, $P_{a_1 a_2}(t_{a_2})=a_2=P_{a_2 a_3}(t_{a_2})$, $P_{a_2 a_3}(t_{a_3})=a_3$, etc. We say that there is a \textit{concatenation} of paths $P_{a_1 a_2}$, $P_{a_2 a_3}$, etc. generated by $D$ provided there exists a solution $\gamma \in D$ such that the path $Q$ fulfilled  
$$
\begin{array}{ll}
Q(t_{a_1})=a_1 ; & \\
Q(t)=P_{a_1 a_2}(t) & \textrm{for } t_{a_1}\leq t \leq t_{a_2} ;\\
Q(t_{a_2})=a_2 ; & \\
Q(t)=P_{a_2 a_3}(t) & \textrm{for } t_{a_2}\leq t \leq t_{a_3} ; \\
Q(t_{a_3})=a_3 ; & \\
\textrm{etc.} &
\end{array}
$$
is generated by $\{\gamma\}$. The \textit{concatenation} of paths $P_{a_1 a_2}$, $P_{a_2 a_3}$, etc. generated by $D$ is the path $Q:=\{\gamma(t):t_{a_1} \leq t \leq \ldots \}$.
\end{df}

\begin{enumerate}
\item[(BV3)] There are all concatenations of simple paths specified in (RS1) generated by $D$.
\end{enumerate}

Finally, we have the following two theorems.

\begin{thm}
\label{thm_sufficient_conditions}
Let $V$ be the subset of $\mathbb{R}^2$ such that $\mathbb{R}^2 \setminus V$ is one unbounded subset of \nolinebreak $\mathbb{R}^2$. Let $V^*=\{x \in D | x(t) \in V \textrm{ for all t} \in \mathbb{R}\}$. Then (RS1) and (BV3) are the sufficient conditions for $T|_{V^*}$ to exhibit Devaney chaos, for $(D,T)$ to exhibit $\omega$-chaos and for $T$ to have infinite topological entropy.
\end{thm}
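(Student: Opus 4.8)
The plan is to establish Devaney chaos, $\omega$-chaos, and infinite topological entropy for the $\mathbb{R}$-action $T$ by constructing, from the simple paths guaranteed by (RS1) and their concatenations guaranteed by (BV3), a rich family of solutions that realizes the desired dynamical complexity. The central idea is that the topological constraint on $V$ — namely that $\mathbb{R}^2\setminus V$ is one unbounded connected set — is exactly what prevents the pathology exploited in Theorem~\ref{thm_not_sufficient_conditions}, while (BV3) is exactly the missing ingredient that lets us glue together prescribed sequences of simple paths into genuine solutions in $D$. So the first step is to fix a countable dense subset $\{p_0,p_1,p_2,\dots\}$ of $V$ together with a point that witnesses (RS2) (for the Devaney part), and to use (RS1) to produce simple paths joining these reference points in any prescribed order.

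The key construction is a \emph{coding} of solutions by sequences of target points. Given any sequence $a_1,a_2,a_3,\dots\in V$, (RS1) provides simple paths $P_{a_1a_2},P_{a_2a_3},\dots$, and (BV3) guarantees that the concatenation $Q$ from Definition~\ref{df_concatenation} is itself generated by a single solution $\gamma\in D$ lying in $V$, hence in $V^*$. I would first verify that the time parameters can be chosen so that the orbit visits each prescribed target, making the assignment (sequence of targets) $\mapsto$ (solution) essentially a symbolic encoding. For transitivity and density of periodic-like behaviour needed for Devaney chaos, I would arrange sequences that run through every $p_i$ (giving a dense orbit under $T$) and sequences that are eventually periodic in their target pattern (giving the dense set of points with recurrent orbits); sensitive dependence then follows because two solutions can be made to agree on a long initial segment of targets yet be steered to widely separated targets afterward, so their time-shifts separate. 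The role of the hypothesis on $\mathbb{R}^2\setminus V$ here is to ensure that the simple paths and their concatenations stay inside $V$ and that $V^*$ is nonempty and invariant in the right way, which is where Theorem~\ref{thm_not_sufficient_conditions} shows the bare (RS1)+(RS2) formulation fails.

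For $\omega$-chaos and infinite topological entropy on $(D,T)$, I would exhibit an uncountable \emph{scrambled-type} set and, respectively, embed a full shift on arbitrarily many symbols. Concretely, using (RS1) pick finitely many distinct reference points and, for each finite word in those symbols, use (BV3) to realize the corresponding concatenated solution; this gives a semiconjugacy from a subshift of finite type (indeed a full shift on $k$ symbols, for every $k$) onto a $T$-invariant subsystem of $D$, and since the full shift on $k$ symbols has entropy $\log k$, letting $k\to\infty$ yields infinite topological entropy. For $\omega$-chaos one builds an uncountable family of target sequences whose pairwise $\omega$-limit sets differ appropriately (e.g.\ indexed by an uncountable almost-disjoint family), transports them through the coding, and checks the $\omega$-chaos conditions on the resulting solutions.

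The main obstacle I anticipate is \emph{not} the symbolic bookkeeping but the analytic step of showing that the coding map is well defined and has the continuity/measurability properties the chaos definitions require: one must confirm that the concatenated object $Q$ from (BV3) is a bona fide solution (absolutely continuous with $\dot Q\in F(Q)$ a.e.), that the passage times $t_{a_i}$ can be controlled so that time-shifts $T_t$ act the way the symbolic picture predicts, and — most delicately — that sensitive dependence and the separation estimates are uniform enough, which is precisely where one leans on the quantitative content of the simple-path definition (finitely many discontinuities of $\dot v$, and $a\neq v(t)\neq b$ strictly between endpoints) together with the geometric hypothesis that $\mathbb{R}^2\setminus V$ is a single unbounded piece. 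I expect this last geometric hypothesis to be the crux: it must be used to guarantee that distinct symbolic codes cannot be forced to collapse to the same orbit, thereby repairing the gap identified in Theorem~\ref{thm_not_sufficient_conditions}.
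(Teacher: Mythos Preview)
Your proposal outlines a direct symbolic-coding construction, which is essentially a from-scratch reproduction of what Raines and Stockman presumably carry out in \cite{raines_stockman}. The paper's own proof takes a much shorter and more indirect route: it does \emph{not} redo the chaos arguments at all, but instead observes that the Raines--Stockman proofs already tacitly used concatenation of simple paths (i.e.\ (BV3)) and the geometric hypothesis on $V$; once these are made explicit hypotheses, those proofs become valid as written. The only new technical ingredient the paper supplies is the short lemma that (RS1) together with (BV3), under the assumption that $\mathbb{R}^2\setminus V$ is a single unbounded set, implies (RS2). So where you propose to build dense orbits, periodic orbits, embedded shifts, and $\omega$-scrambled sets by hand, the paper simply points back to \cite{raines_stockman} and checks that the missing hypothesis (BV3) closes the logical gap.

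One point in your sketch deserves care: you write that you will ``fix \dots\ a point that witnesses (RS2) (for the Devaney part)'', but (RS2) is not among the hypotheses of the theorem --- only (RS1) and (BV3) are assumed. If your argument for Devaney chaos genuinely needs a solution whose range is not dense in $V$, you must derive that from (RS1), (BV3), and the geometric constraint on $V$, exactly as the paper does in its lemma; you cannot simply assume such a witness exists. Relatedly, your speculation that the hypothesis ``$\mathbb{R}^2\setminus V$ is one unbounded set'' is the crux preventing collapse of distinct codes is not quite how the paper uses it: in the paper that hypothesis is what makes the (RS1)$+$(BV3)$\Rightarrow$(RS2) lemma go through (it guarantees room in $V$ for a point $\bar c$ with a neighbourhood missed by a closed concatenated path), and more broadly it is an implicit standing assumption already present in the Raines--Stockman arguments being invoked.
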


These reformulated sufficient conditions also ensure Li-Yorke and distributional chaos in our system but in this paper we do not focus on these types of chaos.

\begin{thm}
\label{thm_fixed_points_imply_RS1andBV3}
Fixed points imply the conditions (RS1) and (BV3) with a set $V \subset \mathbb{R}^2$ such that $\mathbb{R}^2 \setminus V$ is one unbounded subset of $\mathbb{R}^2$.
\end{thm}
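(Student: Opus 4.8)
The plan is to start from a fixed point $p$ of the inclusion, i.e.\ a point at which one of the fields vanishes, say $f_1(p)=0$, so that the constant function $x\equiv p$ belongs to $D$ and, by continuity of $f_1$, the dynamics is arbitrarily slow in a neighbourhood of $p$. Around $p$ I would fix a small bounded topological disc $V$ (taken closed, so that $V=\overline V$), chosen small enough that $f_1,f_2$ provide full local reachability on it. Then $\mathbb{R}^2\setminus V$ is automatically one unbounded connected subset of $\mathbb{R}^2$, which already furnishes the set $V$ demanded by the statement and by Theorem~\ref{thm_sufficient_conditions}. The boundedness of $V$ is the feature I will exploit repeatedly: on the compact set $\overline V$ both $|f_1|$ and $|f_2|$ are bounded, say by a constant $M$.

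For (RS1), given arbitrary $a,b\in V$ I would construct a simple path from $a$ to $b$ inside $V$ by joining finitely many arcs of the two fields and switching between $f_1$ and $f_2$ at finitely many instants. The slow region near $p$ is what makes full reachability possible: a trajectory of $f_1$ can be made to spend as long as required near $p$ and to turn through every direction there, while $f_2$ supplies a transverse velocity, so that by alternating the two fields and routing through the slow region one reaches any prescribed $b$. This is essentially the Raines--Stockman reachability construction of \cite{raines_stockman}, which I would only have to adapt so that every arc stays inside the bounded disc $V$ and so that the finitely many switch times and the final approach can be perturbed slightly to guarantee $a\neq v(t)\neq b$ for $t_0<t<t_1$; this makes the path simple and keeps $\dot v$ with finitely many discontinuities.

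For (BV3), I would reduce the claim to a single gluing observation: if $v_i\in D$ generates $P_{a_i a_{i+1}}$ and $v_{i+1}\in D$ generates $P_{a_{i+1}a_{i+2}}$, then, after the time shift provided by the $\mathbb{R}$-action $T$ so that both pass through $a_{i+1}$ at the common instant $t_{a_{i+1}}$, the function equal to $v_i$ before $t_{a_{i+1}}$ and to $v_{i+1}$ afterwards is again a solution, since it is continuous at the junction, absolutely continuous on each side, and its derivative lies in $F$ a.e., the junction being a single corner at which a switch between $f_1$ and $f_2$ is permitted. Iterating this over the sequence $a_1,a_2,a_3,\dots$ of Definition~\ref{df_concatenation} produces a function $\gamma$ realizing the whole concatenated path $Q$. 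The genuinely delicate point, which I expect to be the main obstacle, is the \emph{infinite} concatenation that chaos requires: the prescribed times $t_{a_1}<t_{a_2}<\dots$ may accumulate at a finite $T^*$, and one must still exhibit a single $\gamma\in D$ defined on all of $\mathbb{R}$. Here the bounded disc around the fixed point is decisive. Since each arc lies in $\overline V$ and $|\dot\gamma|\le M$ a.e., the total variation of $\gamma$ on $[t_{a_1},T^*)$ is at most $M(T^*-t_{a_1})<\infty$, so $\gamma$ extends as an absolutely continuous solution up to $T^*$ with $\gamma(T^*)\in\overline V=V$ well defined, and one continues past $T^*$ using existence of solutions of $\dot x=f_1(x)$; in the case $T^*=\infty$ no accumulation occurs and the gluing directly yields a solution on $[t_{a_1},\infty)$. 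In either case, extending $\gamma$ to the whole line gives $\gamma\in D$ that generates $Q$ and keeps the trajectory in $V$, which is exactly (BV3).
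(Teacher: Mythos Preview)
Your argument for (BV3) by gluing at junction points is essentially the paper's own argument: since the hypotheses include \emph{no restrictions on switching}, at each connection point the concatenated solution either continues along the same integral curve or switches branches, and in either case one obtains an absolutely continuous function whose derivative lies in $F$ a.e. Your additional discussion of accumulating switch times is not needed for the statement but does no harm.

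The genuine gap is in your construction of $V$ and your verification of (RS1). You take $V$ to be an arbitrary small closed disc about the fixed point $p$ and assert ``full local reachability'' on the grounds that a trajectory of $f_1$ ``can be made to spend as long as required near $p$ and to turn through every direction there''. This is not justified and is in general false: near a hyperbolic node, $f_1$-trajectories approach or leave $p$ asymptotically tangent to fixed eigendirections and do \emph{not} turn through all directions; near a saddle they are confined to hyperbolic sectors. Even in the focus case, $f_1$ alone only moves points monotonically toward (or away from) $p$, and $f_2$, being nearly constant near $p$, supplies motion in essentially a single direction; you give no mechanism for reaching a prescribed $b$ that lies in the ``wrong'' direction relative to $f_2(p)$ while staying inside the small disc. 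The phrase ``essentially the Raines--Stockman reachability construction'' hides exactly the work that is required.

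The paper's route is quite different and avoids this difficulty. Under the standing hypotheses (a hyperbolic singular point $a^*$ of $f_1$ lying in a compact region $K$ where $f_2$ has \emph{no bounded solutions}, plus unrestricted switching), one first produces a simple closed path $P\subset K$ from some $a$ back to $a$, and defines $V$ to be $P$ together with all bounded complementary components of $\mathbb{R}^2\setminus P$; then $\mathbb{R}^2\setminus V$ is the single unbounded component by construction. Reachability is obtained not by ``turning'' near the fixed point but by using $P$ as a backbone: from any $c\in V$ an $f_2$-arc (which cannot stay bounded, hence must cross $P$) carries $c$ to a point $c_P\in P$; one then travels along an appropriate sub-arc of $P$ to a point $d_P$; finally an $f_2$-arc goes from $d_P$ to $d$. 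The paper's contribution here is to observe that the na\"{\i}ve concatenation through the base point $a$ need not be simple, and to fix this by choosing the sub-arc of $P$ from $c_P$ to $d_P$ that avoids $a$ when necessary. If you want to salvage your disc construction you would have to replace the ``turning'' heuristic by a concrete argument of this type, and for that you will need the hypothesis on $f_2$ that you omitted.
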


\section{Proofs of the main results}
\label{section_proofs}

In order to continue with the proofs we describe the set of solutions of our system as a \nolinebreak topological space and we recall the definitions of Devaney chaos, $\omega$-chaos and topological entropy for our system. We consider the metric on the set $D$ defined by 
$$\nu(x,y)=\sup_{t \in \mathbb{R}}\frac{\nu_t(x,y)}{2^{|t|}}$$
where $\nu_t(x,y)=\min\{d(x(t),y(t)),1\}$ and $d(\cdot,\cdot)$ is the usual metric on $\mathbb{R}^2$ \cite{raines_stockman}. So, $D$ is considered as a topological space with the topology generated by this metric. Note that two relevant spaces connected with our multi-valued dynamical system are considered: the state space $\mathbb{R}^2$ and the topological space of solutions $D$. In the set $D$, there can be the solutions of the first branch, i.e. of the differential equation $\dot x = f_1(x)$, the solutions of the second branch, i.e. of the differential equation $\dot x = f_2(x)$, and the solutions constructed by jumping from the integral curves generated by $f_1$ to the integral curves generated by $f_2$, and vice versa, in some points from $\mathbb{R}^2$, see also \cite{raines_stockman}. It is closely related to the modelled problem. We say that such a solution 'switches' from the integral curve generated by $f_1$ to the integral curve generated by $f_2$, and vice versa, in these points. Any such solution is connected with the time sequence of switching from the branch $f_1$ to the branch $f_2$, and vice versa. The other way around, we say that the solution $x$ such that $\dot x(t)=f_1(x(t))$, or $\dot x(t)=f_2(x(t))$, for $t \in [t_a,t_b]$ with $x(t_a)=a$ and $x(t_b)=b$ where $a,b \in \mathbb{R}^2$ 'follows' the integral curve generated by $f_1$, or $f_2$, from $a$ to $b$. Let $R \subseteq D$ be closed and \linebreak $T$-invariant. We say that $(R,T)$ has Devaney chaos, if $T$ is topological transitive, has a \nolinebreak dense set of periodic points and has sensitive dependence on initial conditions on $R$ as usually considered \cite{devaney}, \cite{raines_stockman}. Let $S \subseteq D$ (having at least two points). We say that $S$ is an $\omega$-scrambled set, if for any $x,y \in S$ with $x \neq y$
\begin{itemize}
\item $\omega(x) \setminus \omega(y)$  is uncountable,
\item $\omega(x) \cap \omega(y)$ is not empty,
\item $\omega(x)$ is not included in the set of periodic points
\end{itemize}
where $\omega(x)$ and $\omega(y)$ is the omega-limit set of $x$ and of $y$ under $T$ \cite{raines_stockman}, \cite{shi-hai-li}. The natural $\mathbb{R}$-action $T: D \times \mathbb{R} \rightarrow D$ is called $\omega$-chaotic provided there exists an uncountable \linebreak $\omega$-scrambled set in $D$ \cite{raines_stockman}, \cite{shi-hai-li}. Let $s \in \mathbb{R}^+$ and for each $x,y \in D$ 
$$\nu_s^T(x,y)=\max_{-s \leq t \leq s} \{ \nu(T_t(x),T_t(y)) \}$$
gives the metric on the orbit segments \cite{raines_stockman}. The notion of the topological entropy \cite{katok_hasselblatt} is extended in the usual way for our considered system \cite{stockman} as we can see below. Let $B_T(x,\epsilon,s)=\{y \in D: \nu_s^T(x,y)<\epsilon \}$ denote the open $\epsilon$-neighbourhood around $x \in D$ with respect to the previous metric.  We say that $E \subseteq D$ is $(s,\epsilon)$-spanning provided 
$$D \subseteq \bigcup_{x \in E} B_T(x,\epsilon,s).$$
Let $S_{\nu}(T,\epsilon,s)$ be the minimal cardinality of $(s,\epsilon)$-spanning set. So, the topological entropy of $T$ is defined by 
$$h_{top}(T)=\lim_{\epsilon \rightarrow 0} \limsup_{s \rightarrow \infty} \frac{1}{s} \log{S_{\nu}(T,\epsilon,s)}.$$

Finally, we present the proofs of the main results with comments.

\subsection{Proof of Theorem \ref{thm_not_sufficient_conditions}}

This proof is based on the counterexamples. In Counterexample \ref{cexm_set_of_solutions}, there is an example of the set of solutions where Devaney chaos is not present although (RS1) and (RS2) are fulfilled, and the example of the set of solutions where \linebreak $\omega$-chaos and infinite topological entropy are not present although (RS1) is fulfilled. In these examples, we consider the set $V$ such that $\mathbb{R}^2 \setminus V$ is one unbounded subset of $\mathbb{R}^2$ but the considered sets of solutions do not fulfil newly formulated (BV3). In Counterexample \nolinebreak \ref{cexm_set_V}, we show a subset of $\mathbb{R}^2$ where $\omega$-chaos is not present although the corresponding set of solutions fulfils (RS1) and also newly formulated (BV3).

\begin{cexm}
\label{cexm_set_of_solutions}
The considered subset  $U$ of $\mathbb{R}^2$ is displayed in Figure \ref{fig_counterexamples_set_of_solutions}. The set $U$ is depicted as the line segment with endpoints $c_1$ and $c_2$. The appropriate differential inclusion is denoted by $G=\{g_1,g_2\}$, and $\varphi$, $\psi$ denote the flows generated by $g_1$, $g_2$, respectively.
\begin{figure}[ht]
  \centering
  \includegraphics[height=1.5cm]{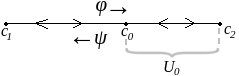}
  \caption{}
  \label{fig_counterexamples_set_of_solutions}
\end{figure}
The arrows represent the trajectories of the flows $\varphi$ and $\psi$. Further, the subset $U_0$ of the set $U$ is depicted as the line segment with endpoints $c_0$ and $c_2$, see Figure \nolinebreak \ref{fig_counterexamples_set_of_solutions}. Now, we focus on the specification of two sets of solutions denoted by $\hat{R}$ and by $\bar{R}$. Firstly, let us consider $X$ as the set of solutions representing the periodic solutions passing through the entire set $U$ (cycle). More precise description of the set $X$ is given in the following. Let $x$ denote an element from $X$ and let $X:=\{x_{a\varphi},x_{a\psi},x_{c_1},x_{c_2}\}$ where $x_{a\varphi}$ and $x_{a\psi}$ represent the sets of solutions with one solution for every $a \in U \setminus \{c_1, c_2 \}$. Each solution $x \in X$ follows the integral curve generated by $g_1$ to the point $c_2$, then $x$ switches to the integral curve generated by $g_2$ and follows this integral curve to the point $c_1$, then in the point $c_1$ the solution $x$ switches to the integral curve generated by $g_1$ and so on for $t \rightarrow \infty$ and also for $t \rightarrow -\infty$, see Figure \ref{fig_counterexamples_set_of_solutions}. So, $x_{a\varphi}$ denotes the solution in $X$ with the initial condition $x_{a\varphi}(0)=a$ for $a \in U \setminus \{c_1, c_2\}$, such that $x_{a\varphi}$ initially follows the integral curve generated by $g_1$ from $a$ to the point $c_2$. Analogously, $x_{a\psi}$ denotes the solution in $X$ with the initial condition $x_{a\psi}(0)=a$ for $a \in U \setminus \{c_1, c_2\}$, such that $x_{a\psi}$ initially follows the integral curve generated by $g_2$ from $a$ to the point $c_1$. $x_{c_1},x_{c_2}$ denotes the solution with the initial condition $x_{c_1}(0)=c_1$, $x_{c_2}(0)=c_2$, respectively. Obviously, $x_{c_1}$ initially follows the integral curve generated by $g_1$ and $x_{c_2}$ the integral curve generated by $g_2$. Thus, let $\ldots<t_{-2}^x<t_{-1}^x<t_0^x<t_1^x<t_2^x<\ldots$ be the time sequence of switching from the branch $g_1$ to the branch $g_2$ in times $t_{2i+1}^x$, $i \in \mathbb{Z}$, and from $g_2$ to $g_1$ in times $t_{2i}^x$, $i \in \mathbb{Z}$, for each $x \in X$. Let $t_0^x<0<t_1^x$ for $x_{a\varphi}$, $t_{-1}^x<0<t_0^x$ for $x_{a\psi}$, $t_0^x=0$ for $x_{c_1}$ and $t_1^x=0$ for $x_{c_2}$. Thus, for each $x \in X$
$$\dot x(t)=g_1(x) \textrm{ for } t \in \bigcup_{i \in \mathbb{Z}}(t_{2i}^x,t_{2i+1}^x)$$
$$\dot x(t)=g_2(x) \textrm{ for } t \in \bigcup_{i \in \mathbb{Z}}(t_{2i-1}^x,t_{2i}^x)$$
with $x(t_{2i}^x)=c_1$ and $x(t_{2i+1}^x)=c_2$ where $i \in \mathbb{Z}$, see Figure \ref{fig_counterexamples_set_of_solutions}. Let $\tau_1:=t_{2i+1}^x-t_{2i}^x$ and $\tau_2:=t_{2i}^x-t_{2i-1}^x$ for each $x \in X$. Note that $\tau_1$ and $\tau_2$ are the same values for all $x$. We see that each solution $x$ is the periodic solution of the length $\tau=\tau_1+\tau_2$, i.e. $x(t+\tau)=x(t)$ for every $t \in \mathbb{R}$. Secondly, let us consider $X_0$ as the set of solutions representing the periodic solutions passing through the entire subset $U_0 \subset U$ (cycle). More precise description of the set $X_0$ is the analogous description as previous with the difference that the end points of the set $U_0$ are $c_0$ and $c_2$, so we write $c_0$ instead of $c_1$ in the previous description. Finally, define $\hat{R}:=X \cup X_0$ and $\bar{R}:=X$. Let $\hat{T}$ denote the natural $\mathbb{R}$-action on $\hat{R}$ and let $\bar{T}$ denote the natural $\mathbb{R}$-action on $\bar{R}$. Clearly, $\hat{R}$, $\bar{R}$ are both closed and $\hat{T}$-, $\bar{T}$-invariant, respectively. The set $U$ evidently has the property that $\mathbb{R}^2 \setminus V$ is one unbounded subset of $\mathbb{R}^2$. And obviously, for every $a,b \in U$ there is a simple path from $a$ to $b$ in $U$ generated by $X$ and there is a solution $w \in X_0$ such that $w(t) \in U$ for all $t \in \mathbb{R}$ and $\{w(t):t \in \mathbb{R}\}$ is not dense in $U$. So, if we consider $U$ with $\hat{R}$ the conditions (RS1) and (RS2) are fulfilled, and if we consider $U$ with $\bar{R}$ the condition (RS1) is fulfilled.
\end{cexm}

\begin{lemma}
Let $U$ be the subset of $\mathbb{R}^2$ specified above and $\hat{R}$ be the set of solutions specified above. Let $U^*=\{y \in \hat{R} | y(t) \in U \textit{ for all t} \in \mathbb{R}\}$. Then $\hat{T}|_{U^*}$ is not chaotic in the sense of Devaney.
\end{lemma}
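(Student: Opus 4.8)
The plan is to show that $\hat T|_{U^*}$ fails the very first of the three defining properties of Devaney chaos recalled above, namely topological transitivity. Since transitivity is necessary for Devaney chaos, establishing its failure suffices to prove the lemma.

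First I would pin down the structure of the phase space. Every solution in $\hat R = X \cup X_0$ stays inside $U$, so $U^*=\hat R = X \cup X_0$. The two pieces are disjoint and each is $\hat T$-invariant: a solution in $X$ sweeps the whole segment $U$ and in particular passes through $c_1$, whereas a solution in $X_0$ never leaves the subsegment $U_0$ and so never reaches $c_1$; and $\hat T_t$ merely time-shifts a solution, so a shift of a periodic solution cycling through $U$ (resp. $U_0$) again cycles through $U$ (resp. $U_0$). Thus $X$ and $X_0$ are nonempty, proper, disjoint, invariant subsets covering $\hat R$.

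The heart of the argument is a quantitative separation estimate: $\nu(x,y)\ge \epsilon_0$ for some fixed $\epsilon_0>0$ and all $x\in X$, $y\in X_0$. To prove it I would parametrize the segment $U$ by arc length, placing $c_1$ at height $0$, $c_2$ at the far end, and $c_0$ at height $h_0>0$, so that $U_0=\{h\ge h_0\}$ and $d(c_1,U_0)=h_0$. Every $x\in X$ is periodic of the common period $\tau=\tau_1+\tau_2$ and passes through $c_1$ once per period, at the switching times $t^x_{2i}$, which are spaced exactly $\tau$ apart; hence there is a time $t_0$ with $|t_0|\le\tau$ and $x(t_0)=c_1$. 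Since $y(t_0)\in U_0$ for every $y\in X_0$, points on the same segment give $d(x(t_0),y(t_0))=d(c_1,y(t_0))\ge h_0$, so $\nu_{t_0}(x,y)\ge\min\{h_0,1\}$ and therefore $\nu(x,y)\ge\min\{h_0,1\}\,2^{-\tau}=:\epsilon_0>0$.

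Finally I would assemble the conclusion. The separation estimate shows that each point of $X$ carries an $\epsilon_0$-ball disjoint from $X_0$ and vice versa, so $X$ and $X_0$ are simultaneously open and closed in $\hat R$. Consequently no orbit can be dense, since the orbit of any point stays inside the clopen invariant piece containing it; equivalently, the open sets $A=X$ and $B=X_0$ witness the failure of transitivity, because $\hat T_t(A)\subseteq X$ is disjoint from $B$ for all $t$. Either way $\hat T|_{U^*}$ is not topologically transitive, hence not chaotic in the sense of Devaney. I expect the only delicate point to be the separation estimate, and specifically the \emph{uniformity} of the time bound $|t_0|\le\tau$: it is exactly the fact that the period $\tau$ is the same for all $x\in X$ (noted in Counterexample \ref{cexm_set_of_solutions}) that makes $\epsilon_0$ independent of $x$ and $y$, which is what the clopen-partition argument requires.
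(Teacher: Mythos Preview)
Your argument is correct, but it attacks a different leg of Devaney's definition than the paper does. The paper proves that $\hat T|_{U^*}$ fails \emph{sensitive dependence on initial conditions}: it fixes $x_{c_1}$, observes that for any $z$ close to $x_{c_1}$ in the $\nu$-metric the pointwise distances $d(x_{c_1}(t),z(t))$ are uniformly controlled by $d(x_{c_1}(0),z(0))$, and from this deduces that $\nu(\hat T_s(x_{c_1}),\hat T_s(z))$ can never exceed the initial $\nu$-distance, contradicting any fixed sensitivity constant $\delta$. You instead show failure of \emph{topological transitivity} by exhibiting the clopen, invariant, nonempty partition $\hat R = X \sqcup X_0$; your uniform separation bound $\nu(x,y)\ge \min\{h_0,1\}\,2^{-\tau}$ for $x\in X$, $y\in X_0$ is exactly what makes both pieces open, and your observation that the common period $\tau$ supplies the needed uniformity is on point. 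Your route is arguably cleaner and more robust---a clopen invariant decomposition is the canonical obstruction to transitivity and uses only the coarse fact that $X_0$-solutions avoid $c_1$---whereas the paper's route yields the complementary information that the system is in fact equicontinuous-like near $x_{c_1}$, at the cost of a more delicate estimate on how $\nu$-distances evolve under the shift.
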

\begin{proof}
Obviously, $U^*=\hat{R}$. We use the proof by contradiction to show that $\hat{T}|_{U^*}$ has not sensitive dependence on initial conditions, and therefore $\hat{T}|_{U^*}$ is not chaotic in the sense of Devaney. So, let the sensitive dependence on initial conditions be assumed in this system. Let $0<\epsilon_0<1$. Pick $x_{c_1} \in U^*$ with $x_{c_1}(0)=c_1$ specified above. Let $B_{\epsilon_0}(x_{c_1})$ be the open $\epsilon_0$-neighbourhood around $x_{c_1}$ with respect to the $\nu$ metric. Let $\delta>0$ be the sensitivity constant, i.e. for any $x \in U^*$ and $\epsilon>0$ there is a solution $z$ and $s \in \mathbb{R}$ such that $\nu(\hat{T}(x,s),\hat{T}(z,s))>\delta$ and $z \in B_{\epsilon}(x)$. Pick $z_0 \in U^*$ such that $z_0 \in B_{\epsilon_0}(x_{c_1})$ and $\nu(\hat{T}(x_{c_1},s_0),\hat{T}(z_0,s_0))>\delta$ for some $s_0 \in \mathbb{R}$. Let $\delta_0:=\nu(x_{c_1},z_0)$. Since $z_0 \in B_{\epsilon_0}(x_{c_1}) \subseteq U^*$, $x_{c_1}\neq z_0$, $x_{c_1}(0) \neq z_0(0)$ and $\epsilon_0<1$ we see that $d(x_{c_1}(t),z_0(t))\leq d(x_{c_1}(0),z_0(0))<1$ for every $t$ and $0<\delta_0<\epsilon_0$. Thus, we have
$$0<\delta<\nu(\hat{T}(x_{c_1},s_0),\hat{T}(z_0,s_0))\leq \delta_0<\epsilon_0$$
for some $s_0 \in \mathbb{R}$. Now, let $0<\epsilon_1<\delta$. Let $B_{\epsilon_1}(x_{c_1})$ be the open $\epsilon_1$-neighbourhood around $x_{c_1}$ with respect to the $\nu$ metric. Pick $z_1 \in U^*$ such that $z_1 \in B_{\epsilon_1}(x_{c_1})$ and $\nu(\hat{T}(x_{c_1},s_1),\hat{T}(z_1,s_1))>\delta$ for some $s_1 \in \mathbb{R}$. Let $\delta_1:=\nu(x_{c_1},z_1)$. Finally, we have the contradiction
$$\delta<\nu(\hat{T}(x_{c_1},s_1),\hat{T}(z_1,s_1))\leq\delta_1<\epsilon_1<\delta$$
for some $s_1 \in \mathbb{R}$.
\end{proof}

\begin{lemma}
Let $U$ be the subset of $\mathbb{R}^2$ specified above and $\bar{R}$ be the set of solutions specified above. Then $(\bar{R},\bar{T})$ is not $\omega$-chaotic.
\end{lemma}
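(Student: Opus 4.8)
The plan is to exploit the fact that, by construction, every element of $\bar{R}=X$ is a periodic solution of length $\tau=\tau_1+\tau_2$, and to show that this single feature already obstructs the existence of any $\omega$-scrambled set. Recall that $(\bar{R},\bar{T})$ is $\omega$-chaotic only if there exists an uncountable $\omega$-scrambled set $S\subseteq\bar{R}$, and that membership in an $\omega$-scrambled set requires, for each point $x\in S$, that $\omega(x)$ \emph{not} be included in the set of periodic points of $\bar{T}$. I would prove that this last requirement fails for \emph{every} $x\in\bar{R}$, so that no subset of $\bar{R}$ with at least two points can be $\omega$-scrambled.

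First I would translate the periodicity of the solutions into periodicity of points under the natural $\mathbb{R}$-action. Fix $x\in\bar{R}$. Since $x(t+\tau)=x(t)$ for all $t\in\mathbb{R}$, the definition of the action gives $\bar{T}_\tau(x)(s)=x(\tau+s)=x(s)$ for every $s\in\mathbb{R}$, that is $\bar{T}_\tau(x)=x$. Hence each $x\in\bar{R}$ is a periodic point of $\bar{T}$ of period $\tau$, and so $\bar{R}$ is entirely contained in the set of periodic points of $\bar{T}$.

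Next I would locate the $\omega$-limit set. Because $\bar{R}$ is closed and $\bar{T}$-invariant and $x\in\bar{R}$, the forward orbit $\{\bar{T}_t(x):t\geq 0\}$ stays inside $\bar{R}$, and therefore $\omega(x)$, being contained in the closure of this orbit, satisfies $\omega(x)\subseteq\bar{R}$. Combined with the previous step, $\omega(x)$ is a subset of the periodic points of $\bar{T}$ for every $x\in\bar{R}$. One may also identify $\omega(x)$ concretely: the map $t\mapsto\bar{T}_t(x)$ is continuous and $\tau$-periodic, so its image, namely the full orbit of $x$, is compact, equals $\omega(x)$, and consists only of period-$\tau$ points.

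Consequently the third defining property of an $\omega$-scrambled set is violated by every point of $\bar{R}$, so $\bar{R}$ admits no $\omega$-scrambled subset with two or more points, and \emph{a fortiori} no uncountable one; thus $(\bar{R},\bar{T})$ is not $\omega$-chaotic. I do not expect a genuine obstacle here. The only points needing care are the routine verification that periodicity of the solution as a function of $t$ is precisely periodicity of the associated point under $\bar{T}$, together with the elementary fact that the $\omega$-limit set of a point lies in the closure of its forward orbit and hence cannot leave the closed invariant set $\bar{R}$.
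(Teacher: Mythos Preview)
Your proposal is correct and follows essentially the same approach as the paper: both show that every $x\in\bar{R}$ is $\bar{T}$-periodic of period $\tau$, so $\omega(x)\subseteq\bar{R}$ lies entirely in the set of periodic points and the third defining condition of an $\omega$-scrambled set fails for every $x$. The only difference is cosmetic: the paper explicitly computes $\omega(x)=\bar{R}$ (using that $\bar{R}$ is a single periodic $\bar{T}$-orbit), whereas you stop at the containment $\omega(x)\subseteq\bar{R}$, which already suffices.
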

\begin{proof}
Pick arbitrary $x \in \bar{R}$. Clearly, by construction $\bar{T}(x,\tau)=x$ and for every $y \in \bar{R}$ with $y \neq x$ there exists $r \in (0,\tau)$ such that $y=\bar{T}(x,r)$. And, $\bar{T}(x,r)=\bar{T}(x,r+n\tau)=y$ for $n \in \mathbb{N}$. So, $\lim_{n \rightarrow \infty}\bar{T}(x,r+n\tau)=y$ and thus $y \in \omega(x)$. We see that $\omega(x)=\bar{R}$ for each $x \in \bar{R}$ and $\bar{R}$ contains only periodic points. Thus, there is no $\omega$-scrambled set.
\end{proof}

\begin{lemma}
Let $U$ be the subset of $\mathbb{R}^2$ specified above and $\bar{R}$ be the set of solutions specified above. Then $\bar{T}$ has zero topological entropy.
\end{lemma}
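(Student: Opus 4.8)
The plan is to exploit the structure established in the preceding lemma, namely that $\bar R$ is a single periodic orbit of period $\tau$: for the base solution, say $x_0\in\bar R$, we have $\bar T_\tau x_0=x_0$ and $\bar R=\{\bar T_r x_0:0\le r<\tau\}$, so that $\bar R$ is the continuous image of the circle $\mathbb{R}/\tau\mathbb{Z}$ under the (continuous) action and hence $(\bar R,\nu)$ is compact. The idea is then to show that the dynamical metrics $\nu_s^{\bar T}$ stabilize as $s$ grows, which forces the minimal spanning numbers to be bounded independently of $s$ and therefore to contribute zero exponential growth rate.

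The key step is the observation that, for any $x,y\in\bar R$, the function $t\mapsto\nu(\bar T_t x,\bar T_t y)$ is $\tau$-periodic, because $\bar T_{t+\tau}=\bar T_t$ on $\bar R$. Consequently, as soon as $2s\ge\tau$ the interval $[-s,s]$ contains a full period, so the range of this periodic function over $[-s,s]$ is already its full range, and
$$\nu_s^{\bar T}(x,y)=\max_{-s\le t\le s}\nu(\bar T_t x,\bar T_t y)=\max_{t\in[0,\tau]}\nu(\bar T_t x,\bar T_t y)=:\rho(x,y),$$
a metric on $\bar R$ that no longer depends on $s$. Hence, for every $s\ge\tau/2$, a set $E\subseteq\bar R$ is $(s,\epsilon)$-spanning exactly when it is an $\epsilon$-net for $\rho$, so $S_\nu(\bar T,\epsilon,s)$ equals the $\rho$-covering number $N_\rho(\epsilon)$ of $\bar R$, a quantity independent of $s$.

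The only genuine work then lies in verifying that $N_\rho(\epsilon)<\infty$, i.e. that $(\bar R,\rho)$ is compact; this is the step I expect to be the main obstacle, since $\rho$ is a supremum-type metric that is a priori finer than $\nu$. I would argue as follows: by continuity of the action $\bar T$ and of $\nu$, the map $(x,y,t)\mapsto\nu(\bar T_t x,\bar T_t y)$ is jointly continuous, and taking the maximum over the compact parameter interval $[0,\tau]$ shows that $\rho$ is $\nu$-continuous. In particular every $\rho$-ball is $\nu$-open, so the identity map $(\bar R,\nu)\to(\bar R,\rho)$ is a continuous bijection from a compact space onto a metric (Hausdorff) space, hence a homeomorphism. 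Therefore $(\bar R,\rho)$ is compact and $N_\rho(\epsilon)$ is finite for every $\epsilon>0$.

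Feeding this into the definition of topological entropy completes the proof:
$$h_{top}(\bar T)=\lim_{\epsilon\to0}\limsup_{s\to\infty}\frac{1}{s}\log S_\nu(\bar T,\epsilon,s)=\lim_{\epsilon\to0}\limsup_{s\to\infty}\frac{\log N_\rho(\epsilon)}{s}=\lim_{\epsilon\to0}0=0,$$
since for each fixed $\epsilon$ the inner $\limsup$ vanishes, the numerator $\log N_\rho(\epsilon)$ being constant in $s$. Thus $\bar T$ has zero topological entropy.
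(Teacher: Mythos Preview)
Your argument is correct. Both you and the paper exploit the fact that $\bar R$ is a single periodic $\bar T$-orbit of period $\tau$, so that the dynamical picture stabilizes once $s\ge\tau/2$; from that point the two proofs diverge. The paper argues directly that for such $s$ one has $\nu_s^{\bar T}(v,w)=0$ for every pair $v,w\in\bar R$---essentially because the orbit segment $\{\bar T_t(v):-s\le t\le s\}$ already sweeps out all of $\bar R$---and hence a single point $(s,\epsilon)$-spans, giving $S_\nu(\bar T,\epsilon,s)=1$ outright. You are more conservative: you only observe that $\nu_s^{\bar T}$ freezes to a fixed metric $\rho$ for $s\ge\tau/2$, and then run a clean compactness argument (joint continuity of the action over the compact parameter interval $[0,\tau]$, plus the circle parametrization of $\bar R$) to conclude that the $\rho$-covering number $N_\rho(\epsilon)$ is finite and $s$-independent. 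Your route avoids having to identify the exact value of $\nu_s^{\bar T}(v,w)$ and would transfer verbatim to any compact periodic orbit of a continuous flow, at the mild cost of not producing the sharp constant $S_\nu=1$ that the paper claims.
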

\begin{proof}
Initially, we show that $S_\nu(\bar{T},\epsilon,s)=1$ for $s \geq \frac{\tau}{2}$ and for arbitrary $\epsilon>0$. The orbit of $\bar{T}$ through $x \in \bar{R}$ is $\{\bar{T}_t(x)|t \in \mathbb{R}\}$. Note that $\bar{T}_t(x) \in \bar{R}$ for every $t$. We know that each solution $x \in \bar{R}$ is periodic solution of the length $\tau$. Pick arbitrary $v,w$ from $\bar{R}$. So, we have
$$\{\bar{T}_t(v)|t \in \mathbb{R}\}=\big\{\bar{T}_t(v)|t \in [0,\tau)\big\}=\big\{\bar{T}_t(w)|t \in [0,\tau)\big\}=\{\bar{T}_t(w)|t \in \mathbb{R}\}$$
with $\bar{T}_{0}(v)=\bar{T}_{\tau}(v)$ and $\bar{T}_{0}(w)=\bar{T}_{\tau}(w)$. Let $\{v_i\}_{i \in [0,\tau)}$ and $\{w_j\}_{j \in [0,\tau)}$ denote elements of the orbit of $\bar{T}$ through $v$ and through $w$, respectively. By construction, for every $x,y \in \bar{R}$ there exists $r \in [0,\tau)$ such that $\bar{T}(x,r)=y$. Let $z \in \bar{R}$ and let $\{z_k\}_{k \in [-\frac{\tau}{2},\frac{\tau}{2})}$ be elements from the orbit of $\bar{T}$ through $z$. Thus, for all $v_i$ we find $r_i \in [0,\tau)$ such that $\bar{T}(v_i,r_i)=z_l$ for some $l \in [-\frac{\tau}{2},\frac{\tau}{2})$ and $\{z_l\}_{l \in [-\frac{\tau}{2},\frac{\tau}{2})}$ contains all elements of the orbit of $\bar{T}$ through $z$. Analogously, for all $w_j$ we find $r_j \in [0,\tau)$ such that $\bar{T}(w_j,r_j)=z_m$ for some $m \in [-\frac{\tau}{2},\frac{\tau}{2})$ and $\{z_m\}_{m \in [-\frac{\tau}{2},\frac{\tau}{2})}$ contains all elements of the orbit of $\bar{T}$ through $z$. Obviously, $\{z_l\}_{l \in [-\frac{\tau}{2},\frac{\tau}{2})}=\{z_m\}_{m \in [-\frac{\tau}{2},\frac{\tau}{2})}$. So, we see that
$$\nu_s^{\bar{T}}(v,w)=\max_{-s \leq t \leq s} \{ \nu(\bar{T}_t(v),\bar{T}_t(w)) \}=0$$
for $s \geq \frac{\tau}{2}$ and for arbitrary $v,w \in \bar{R}$.
From this it follows that $(s,\epsilon)$-spanning $\bar{E}=\{x\}$ where $x \in \bar{R}$ arbitrary, and $B_{\bar{T}}(x,\epsilon,s)=\bar{R}$ for $s \geq \frac{\tau}{2}$ and for arbitrary $\epsilon>0$. Thus, the minimal cardinality of $(s,\epsilon)$-spanning $\bar{E}$ is $S_\nu(\bar{T},\epsilon,s)=1$ for $s \geq \frac{\tau}{2}$ and for arbitrary $\epsilon>0$. Finally,
$$h_{top}(\bar{T})=\lim_{\epsilon \rightarrow 0} \limsup_{s \rightarrow \infty} \frac{1}{s} \log{S_{\nu}(\bar{T},\epsilon,s)}=\lim_{\epsilon \rightarrow 0} \limsup_{s \rightarrow \infty} \frac{1}{s} \log{1}=0.$$
\end{proof}

\begin{cexm}
\label{cexm_set_V}
The considered subset $W$ of $\mathbb{R}^2$ is displayed.in Figure \ref{fig_counterexample_set_V}. The set $W$ is depicted as two curved lines connected in the point $c_1$ and in the point $c_2$. The appropriate differential inclusion is denoted by  $H=\{h_1,h_2\}$, and $\varphi$, $\psi$ denote the flows generated by $h_1$, $h_2$, respectively.
\begin{figure}[ht]
  \centering
  \includegraphics[height=2.5cm]{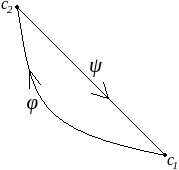}
  \caption{}
  \label{fig_counterexample_set_V}
\end{figure}
The arrows represent trajectories of the flows $\varphi$ and \nolinebreak $\psi$. The set of solutions denoted by $\tilde{S}$ is specified in the following. Let $x_a$ denote an element from $\tilde{S}$ such that $x_a(0)=a$. Each solution $x_a$ passes through the entire set $W$ and alternately follows the integral curve generated by $h_1$ and the integral curve generated by $h_2$. In the point $c_1$ the solution $x_a$ switches from the integral curve generated by $h_2$ to the integral curve generated by $h_1$, and in the point $c_2$ from the integral curve generated by $h_1$ to the integral curve generated by $h_2$. Obviously, each $x_a$ is the periodic solution. Let $\tilde{T}$ denote the natural $\mathbb{R}$-action on $\tilde{S}$. $\tilde{S}$ is closed and $\tilde{T}$-invariant. Evidently, for every $a,b \in W$ there is a simple path from $a$ to $b$ in $W$ generated by $\tilde{S}$ and there are all concatenations of simple path generated by $\tilde{S}$, so the conditions (RS1) and (BV3) are fulfilled. But we can see that $\tilde{S}$ contains only periodic solutions and so $(\tilde{S},\tilde{T})$ is not $\omega$-chaotic.
\end{cexm}

\subsection{Proof of Theorem \ref{thm_sufficient_conditions}}

In all Raines-Stockman's proofs, authors automatically did concatenations of simple paths. This means that the condition (BV3) was used besides conditions (RS1) and (RS2) in their proofs. From this it follows that (BV3) is one of the sufficient conditions for $T|_{V^*}$ to exhibit Devaney chaos, for $(D,T)$ to exhibit $\omega$-chaos and for $T$ to have infinite topological entropy. Additionally, Raines and Stockman considered only the set $V$ with the property that $\mathbb{R}^2 \setminus V$ is one unbounded subset of $\mathbb{R}^2$. So, this property of the set $V$ as the subset of $\mathbb{R}^2$ belongs to the formulation of these sufficient conditions. Now, we show that (RS2) follows from (BV3) and (RS1) for such a set $V$ (with this property).

\begin{lemma}
(RS1) $\&$ (BV3) $\Rightarrow$ (RS2)
\end{lemma}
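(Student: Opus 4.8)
The plan is to use (RS1) together with (BV3) to build one explicit periodic solution whose orbit is a bounded, closed curve, and then to invoke the standing hypothesis on $V$ to guarantee that this orbit omits a point of $V$ — which is exactly what (RS2) asserts.

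First I would fix two distinct points $a,b \in V$. By (RS1) applied to the pairs $(a,b)$ and $(b,a)$ there are simple paths $P_{ab}$ and $P_{ba}$ in $V$ generated by $D$. The endpoints match up, since $P_{ab}$ ends at $b$ where $P_{ba}$ starts and $P_{ba}$ ends at $a$ where $P_{ab}$ starts, so (BV3) supplies a solution $\gamma \in D$ realising the bi-infinite concatenation $\ldots,P_{ab},P_{ba},P_{ab},P_{ba},\ldots$. Setting $w:=\gamma$, the solution $w$ lies in $D$, satisfies $w(t)\in V$ for all $t\in\mathbb{R}$, is periodic, and has orbit $\{w(t):t\in\mathbb{R}\}=P_{ab}\cup P_{ba}$. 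Since each simple path is the continuous image of a compact interval, this orbit is compact, hence closed; consequently it is dense in $V$ if and only if $P_{ab}\cup P_{ba}=V$.

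The hard part is therefore to arrange that $P_{ab}\cup P_{ba}\subsetneq V$, and this is where the hypothesis that $\mathbb{R}^2\setminus V$ is one unbounded subset is indispensable (it is precisely what fails for the lens-shaped set $W$ of Counterexample \ref{cexm_set_V}). If $V$ is unbounded, the bounded set $P_{ab}\cup P_{ba}$ cannot exhaust it and we are done. If $V$ is bounded, I would argue by dichotomy. Suppose $P_{ab}\cup P_{ba}=V$. The union of the two simple arcs either (i) contains a simple closed curve, in which case the Jordan curve theorem produces a bounded component of its complement, i.e. a bounded component of $\mathbb{R}^2\setminus V$, contradicting that the complement is one connected unbounded set; or (ii) contains no simple closed curve, which forces the two arcs to coincide so that $V$ is a single simple arc. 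In case (ii) I re-choose $a',b'$ in the interior of the arc: the simple path $P_{a'b'}$ furnished by (RS1) must run inside the arc and hence covers only the proper sub-arc between $a'$ and $b'$, so the solution obtained by concatenating $P_{a'b'}$ and $P_{b'a'}$ has orbit strictly contained in $V$.

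In every case we obtain a solution $w\in D$ with $w(t)\in V$ for all $t$ together with a point $q\in V\setminus\{w(t):t\in\mathbb{R}\}$. As the orbit is closed, $q$ possesses a neighbourhood meeting $V$ but disjoint from the orbit, so $\{w(t):t\in\mathbb{R}\}$ is not dense in $V$, which is exactly the statement of (RS2). The only delicate point is the dichotomy in the bounded case, where the topological constraint on $\mathbb{R}^2\setminus V$ must be converted, via the Jordan curve theorem, into the nonexistence of an enclosing loop inside $V$.
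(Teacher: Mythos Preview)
Your proposal follows the same skeleton as the paper: pick two points, take the simple paths $P_{ab}$ and $P_{ba}$ supplied by (RS1), concatenate them via (BV3) into a periodic solution $w$, and then argue that the compact orbit $P_{ab}\cup P_{ba}$ cannot be all of $V$.  The difference lies in that last step.  The paper simply \emph{asserts}, with reference to Figures~\ref{fig_proof_BV3_1} and~\ref{fig_proof_BV3_2}, that one can choose $\bar a,\bar b,\bar c\in V$ and $\epsilon>0$ so that $P_{\bar a\bar b}$ and $P_{\bar b\bar a}$ miss the ball $D_\epsilon(\bar c)$; you instead supply an explicit topological argument (Jordan curve theorem plus a re-selection of endpoints in the arc case).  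Your route is more self-contained; the paper's route is shorter but leans on pictures.

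One point to tighten: in this paper a ``simple path'' only forbids revisiting the endpoints $a$ and $b$; it is \emph{not} required to be an embedded arc, so $P_{ab}$ itself may self-intersect.  Consequently your clause ``contains no simple closed curve, which forces the two arcs to coincide so that $V$ is a single simple arc'' is not quite justified as stated --- the correct conclusion is only that the Peano continuum $P_{ab}\cup P_{ba}$ is a dendrite.  Your re-selection trick still works in a dendrite (choose $a',b'$ so that an endpoint of the dendrite is separated from $b'$ by $a'$; then any path in $V$ from $a'$ to $b'$ not revisiting $a'$ must miss that endpoint), so the gap is easily closed, but you should either note that the paper's ``simple paths'' are genuinely injective in the cases at hand or phrase case~(ii) in terms of dendrites rather than simple arcs.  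The paper's pictorial argument glosses over the same subtlety.
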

\begin{proof}
We can see that in each set $V$ complying with our assumptions there exist $\bar{a},\bar{b},\bar{c} \in V$ and $\epsilon>0$ such that $D_{\epsilon}(\bar{c}) \subset V$ and $P_{\bar{a}\bar{b}}$ and $P_{\bar{b}\bar{a}}$ do not intersect $D_{\epsilon}(\bar{c})$ where $D_{\epsilon}(\bar{c})$ is the open $\epsilon$-neighbourhood around $\bar{c}$ with respect to the usual metric on $\mathbb{R}^2$, $P_{\bar{a}\bar{b}}$ is the simple path from $\bar{a}$ to $\bar{b}$ in $V$ generated by $D$ and $P_{\bar{b}\bar{a}}$ is the simple path from $\bar{b}$ to $\bar{a}$ in $V$ generated by $D$. The possible cases are sketched in Figure \ref{fig_proof_BV3_1} and \ref{fig_proof_BV3_2}.
\begin{figure}[ht]
  \centering
  \includegraphics[height=1.7cm]{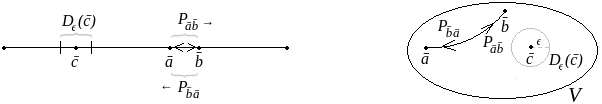}
  \caption{}
  \label{fig_proof_BV3_1}
\end{figure}
In Figure \ref{fig_proof_BV3_1}, we can see the subset of $V$ with empty interior sketched by the line segment (in the left scheme) and the set $V$ with non-empty interior (in the right scheme) where points $\bar{a}$, $\bar{b}$ and $\bar{c}$ with $D_{\epsilon}(\bar{c})$ are located. In this figure, the paths $P_{\bar{a}\bar{b}}$ and $P_{\bar{b}\bar{a}}$ are sketched by the same curve segment with arrows representing the orientation of $P_{\bar{a}\bar{b}}$ and $P_{\bar{b}\bar{a}}$ in the sense of increasing time.
\begin{figure}[ht]
  \centering
  \includegraphics[height=1.7cm]{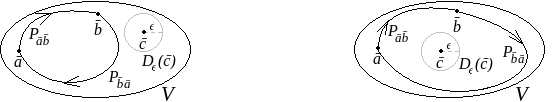}
  \caption{}
  \label{fig_proof_BV3_2}
\end{figure}
In Figure \ref{fig_proof_BV3_2}, we can see the set $V$ with non-empty interior, points $\bar{a}$, $\bar{b}$, paths $P_{\bar{a}\bar{b}}$ and $P_{\bar{b}\bar{a}}$ and the point $\bar{c}$ with $\epsilon$-neighbourhood $D_{\epsilon}(\bar{c})$ located outside (displayed in the left scheme) or inside (displayed in the right scheme) the region bordered by $P_{\bar{a}\bar{b}}$ and $P_{\bar{b}\bar{a}}$. In each case, we construct the solution $w \in D$ so that $w$ alternately follows the simple paths $P_{\bar{a}\bar{b}}$ and $P_{\bar{b}\bar{a}}$. Such a solution exists in $D$ because of our assumptions (simple paths and concatenations). Let $t_{\bar{a}\bar{b}}>0$ and $t_{\bar{b}\bar{a}}>0$ be defined such that $P_{\bar{a}\bar{b}}(t_{\bar{a}\bar{b}})=\bar{b}$ and $P_{\bar{b}\bar{a}}(t_{\bar{b}\bar{a}})=\bar{a}$. So, we define $w$ by
$$
\begin{array}{ll}
w(t)=a & \textrm{for } t=k t_{\bar{a}\bar{b}}+k t_{\bar{b}\bar{a}} ; \\
w(t)=P_{\bar{a}\bar{b}}(t) & \textrm{for } k t_{\bar{a}\bar{b}}+k t_{\bar{b}\bar{a}} \leq t \leq (k+1)t_{\bar{a}\bar{b}}+k t_{\bar{b}\bar{a}} ;\\
w(t)=b & \textrm{for } t=(k+1)t_{\bar{a}\bar{b}}+k t_{\bar{b}\bar{a}} ; \\
w(t)=P_{\bar{b}\bar{a}}(t) & \textrm{for } (k+1)t_{\bar{a}\bar{b}}+k t_{\bar{b}\bar{a}} \leq t \leq (k+1)t_{\bar{a}\bar{b}}+(k+1)t_{\bar{b}\bar{a}} ;
\end{array}
$$
for $k \in \mathbb{N}_0$. We know that $\{w(t):t \in \mathbb{R}\} \cap D_{\epsilon}(\bar{c})=\emptyset$ and $D_{\epsilon}(\bar{c})$ is open. Since $\{w(t):t \in \mathbb{R}\} \subseteq V \setminus D_{\epsilon}(\bar{c})$ the solution $w$ is not dense in $V$.
\end{proof}

\subsection{Proof of Theorem \ref{thm_fixed_points_imply_RS1andBV3}}

Raines and Stockman \cite{raines_stockman}, \cite{stockman_raines} showed that for differential inclusion $F=\{f_1,f_2\}$ with the properties that
\begin{itemize}
\item $f_1$ has a hyperbolic singular point $a^*$ in a region where $f_2$ has no bounded solutions,
\item and $a^*$ is a sink, or a source, or also a saddle point with requirement that $f_2(a^*)$ is not a scalar multiple of an eigenvector of $Df_1(a^*)$,
\item the solutions from $D$ have no restrictions of switching from integral curves generated by one branch of $F$ to the integral curves generated by the other branch of \nolinebreak $F$,
\end{itemize}
we can construct a set $V$ in the following way. Let $K \subset \mathbb{R}^2$ denote the non-empty compact set, where $f_2$ has no bounded solutions. Let $a \in K$. Let $P$ denote the simple path from $a$ to $a$ generated by $D$ such that $P \subset K$. Thus, $P$ is a finite union of arcs and is compact. $\mathbb{R}^2 \setminus P$ has only one unbounded component denoted by $C_0$. The set $\mathbb{R}^2 \setminus P$ can be written as $\Big(\bigcup_{\alpha \in A}C_{\alpha}\Big) \cup C_0$ where $C_{\alpha}$ are bounded components. So, the set $V$ is given by
\begin{equation}
\label{eq_construction_of_V}
\Big(\bigcup_{\alpha \in A}C_{\alpha}\Big) \cup P,
\end{equation}
In \cite{raines_stockman} authors showed that such constructed set $V$ (\ref{eq_construction_of_V}) is closed and satisfies (RS1) and (RS2). In \cite{volna}, we showed that in the last case where $f_2(a^*)$ is a scalar multiple of an eigenvector of $Df_1(a^*)$ a set $V$ given by (\ref{eq_construction_of_V}) can be also constructed. Note, the region $V$ is near the singular point $a^*$ because of the local character of hyperbolic singular points, generally. Now, we show that such constructed set $V$ (\ref{eq_construction_of_V}) satisfies the condition (RS1) and the condition (BV3), see the following paragraphs \ref{paragraph_RS1}, \ref{paragraph_BV3}. By construction, $\mathbb{R}^2 \setminus V$ is one unbounded subset of $\mathbb{R}^2$.

\begin{enumerate}[I]
\item \label{paragraph_RS1}
We show that the set $V$ given by (\ref{eq_construction_of_V}) meets the condition (RS1), similarly as in \nolinebreak \cite{raines_stockman}. But Raines and Stockman constructed a path in $V$ between two arbitrary points from $V$ which may not be the simple path. Let $c,d \in V$ arbitrary. We can find a simple path in $V$ from $c$ to some $c_P \in P$ and a simple path in $V$ from some $d_P \in P$ to $d$ (because $K$ is a region where the branch $f_2$ has no bounded solutions). Thus, the main idea from \cite{raines_stockman} relies on the reasoning that we can connect four simple paths - $P_{c c_P}$, $P_{c_P a}$, $P_{a d_P}$, $P_{d_P d}$ - and the resulting path is the simple path in $V$. The path $P_{c c_P}$ obviously belongs to the branch $f_2$, $P_{c_P a}$ and $P_{a d_P}$ are subsets of $P$, and $P_{d_P d}$ belongs to the branch $f_2$. Such a connection of simple paths is possible because in the points $a, c_P, d_P$ the corresponding solutions switches from the original integral curve to the other integral curve, or follows the original integral curve (no restrictions of switching). But this path constructed in \cite{raines_stockman} may not be the simple path if $c \in P \, \& \, d \not\in P$, or $c \not\in P \, \& \, d \in P$, or $c,d \in P$. In Figure \ref{fig_simple_paths_and_concatenations}, there are the set $V \subset K$ with non-empty interior (in the left scheme) and with empty interior (in the right scheme), the simple path $P$ from $a$ to $a$ with arrows displaying the orientation of $P$ in the sense of increasing time and the trajectories of the flow $\psi$ generated by $f_2$ represented by arrows passing through $K$. If we consider $V$ with non-empty interior then for example the path $P_{c_1 d_2}$ connects the simple paths $P_{c_1 c_2}$ (where $c_2=c_P$), $P_{c_2 a}$ and $P_{a d_2}$, and the resulting path is not the simple path, or the path $P_{c_2 d_2}$ connects the simple paths $P_{c_2 a}$ and $P_{a d_2}$, and the resulting path is not the simple path (see the left scheme in Figure \ref{fig_simple_paths_and_concatenations}). Analogously for the $V$ with empty interior, for example the path $P_{c_1 d_1}$( where $c_1=c_P$ and $d_1=d_P$) connecting simple paths $P_{c_1 a}$ and $P_{a d_1}$ is not simple (see the right scheme in Figure \ref{fig_simple_paths_and_concatenations}). Next to this our main idea relies on the reasoning that we can use the subset of $P$ from $c_P$ to $d_P$ which does not have to go through the point $a$ but such that the resulting path is simple. And so, for our previous examples for $V$ with non-empty interior, the simple path $P_{c_1 d_2}$ connects the simple paths $P_{c_1 c_2}$ and $P_{c_2 d_2}$, the simple path $P_{c_2 d_2}$ is the subset of $P$ not containing the point $a$ (see the left scheme in Figure \ref{fig_simple_paths_and_concatenations}). For our previous example for $V$ with empty interior, the simple path $P_{c_1 d_1}$ is the subset of $P$ not containing the point $a$ (see the right scheme in Figure \ref{fig_simple_paths_and_concatenations}). And so, using this way we can construct the simple path in $V$ between two arbitrary points from $V$ given by (\ref{eq_construction_of_V}).
\begin{figure}[ht]
  \centering
  \includegraphics[height=3cm]{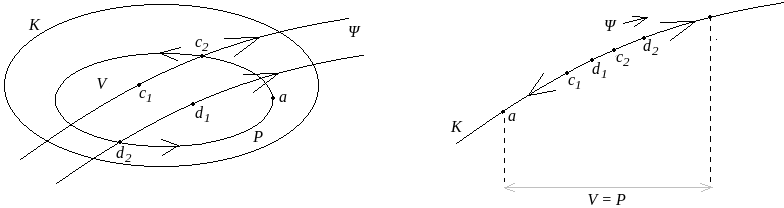}
  \caption{}
  \label{fig_simple_paths_and_concatenations}
\end{figure}
\item \label{paragraph_BV3}
We show that the set $V$ given by (\ref{eq_construction_of_V}) meets the condition (BV3). Let $a,b,c \in V$ arbitrary. We say that the point $b$ is the connection point of paths $P_{ab}$ and $P_{bc}$. There exist two types of concatenations of two simple paths constructed in the way described in the paragraph \ref{paragraph_RS1}: the solution corresponding to the concatenation follows the same integral curve through the connection point or switches from the integral curve generated by the original branch to the integral curve generated by the other branch in the connection point (no restrictions of switching). We can see this using the schemes in Figure \ref{fig_simple_paths_and_concatenations} for the simple paths $P_{ij}$ and $P_{jk}$ where $i,j,k=c_1,d_1,c_2,d_2$. The construction of a concatenation of $n$ simple paths for $n \geq 3$ is similar and is done by the same method. And so, there are all concatenation of simple paths specified in (RS1) generated by $D$ constructed in the mentioned way.
\end{enumerate}

\section{Conclusion and application in economics}

In this paper, we deal with the problem of chaos existence for a class of differential inclusion in $\mathbb{R}^2$ which is implied by fixed points. We present several counterexamples and supplements to \cite{raines_stockman} and we correct the mentioned problems. Finally, we can say that fixed points imply chaos for this class of differential inclusion but Raines-Stockman's proofs are rectified.

Here, we provide our economic application and illustrations of the mentioned problems. A macroeconomic model with an economic cycle can be an example of our differential inclusion $\dot x \in \{f_1(x),f_2(x)\}$ with two branches $f_1$ and $f_2$. The economic cycle (or the business cycle) consists of the expansion phases and of the recession phases which alternate, see \cite{hamilton}, \cite{motohiro_yogo}, \cite{orlando}. In the peak points the recession replaces the expansion and in the trough points the expansion replaces the recession \cite{hamilton}, \cite{motohiro_yogo}, \cite{orlando}. The first branch $\dot x=f_1(x)$ represents the description of macroeconomic situation in a recession and the second branch $\dot x=f_2(x)$ represents the description of macroeconomic situation in an expansion. The solution fulfilling $\dot x(t) \in \{f_1(x(t)),f_2(x(t))\}$ a.e. and switching from the integral curve generated by $f_1$ to the integral curve generated by $f_2$ and vice versa  represents the alternation of these phases in an economy. The trough points are represented by points where this solution switches from the integral curve generated by $f_1$ to the integral curve generated by $f_2$, and vice versa for the peak points. In \cite{volna}, we created such a \nolinebreak model - the branch belonging to the recession phases is the well-known macroeconomic equilibrium model called IS-LM model \cite{gandolfo}, \cite{hicks}, and the branch belonging to the expansion phases is the newly created model called QY-ML model \cite{volna}. This model is named IS-LM/QY-ML model \cite{volna} and can be briefly described by
$$
\left(
\begin{array}{c}
\dot Y \\
\dot R 
\end{array}
\right)
\in \left\{
\left(
\begin{array}{c}
\alpha_1 [I(Y,R)-S(Y,R)] \\
\beta_1 [L(Y,R)-M(Y,R)
\end{array}
\right),
\left(
\begin{array}{l}
\alpha_2 [Q(Y,R)-Y] \\
\beta_2 [M(Y,R)-L(Y,R)]
\end{array}
\right)
\right\}
$$
where $\alpha_1,\alpha_2,\beta_1,\beta_2>0$ and \\
\begin{tabular}{lp{10cm}}
     $Y$  & is an aggregate income (GDP, GNP), \\
     $R$  & is an interest rate, \\
     $I$  & is an investment, \\
     $S$  & is a saving, \\
     $Q$  & is a production, \\
     $L$  & is a demand for money, \\
     $M$  & is a supply of money. \\    
\end{tabular} \\

Firstly, let us consider the set $U \subset \mathbb{R}^2$ and the sets of solutions $\bar{R}$ and $\hat{R}$ described in Counterexample \ref{cexm_set_of_solutions} occurring in the system given by this model. The set of solutions $\bar{R}$, or $\hat{R}$, can be interpreted so that there exists one regular economic cycle in an economy, or there exist two regular economic cycles with different period. In this situation, the solutions have  restrictions of switching between branches and this restriction follows from the modelled economic problem. Secondly, let us consider the set $W \subset \mathbb{R}^2$ and the set of solutions $\tilde{S}$ described in Counterexample \ref{cexm_set_V} occurring in the system given by this model. The interpretation is such that the set $W$ can occur in an economy with some statutory limitations on the levels of the aggregate income $Y$ and of the interest rate $R$ (levels belonging to the 'interior' and 'exterior' are forbidden). This situation requires an additional restriction on the range of solutions, i.e. solutions are absolutely continuous functions $x:\mathbb{R} \rightarrow K_1 \subset \mathbb{R}^2$ with $W \subseteq K_1$, and this restriction follows from the modelled economic problem. And finally, let us consider that properties of relevant economic quantities are such that there exist chaos in mentioned sense in the system given by this model (see more in \cite{volna}). This case can be interpreted so that there exist economic cycles with all possible periods and lengths of the recession and expansion phases and there are no statutory limitations on the levels of the aggregate income $Y$ and of the interest rate $R$.


\begin{thebibliography}{99}

\bibitem{andres_fiser_juttner} J. Andres, J. Fi\v{s}er and L. J\"{u}ttner \textit{On a Multivalued Version of the Sharkovskii Theorem and its Application to Differential Inclusions}, Set-Valued Anal. 10 (2002), 1--14.
\bibitem{andres_furst_pastor} J. Andres, T. F\"{u}rst, K. Pastor, \textit{Sharkovskii’s theorem, differential inclusions, and beyond}, Topol. Methods in Nonlinear Anal. 33 (2009), 149--168.
\bibitem{desheng-li_kloeden} Desheng Li, P. E. Kloeden, \textit{On the dynamics of nonautonomous periodic general dynamical systems and differential inclusions}, J. Diff. Eq. 224 (2006), 1--38.
\bibitem{desheng-li_xiaoxian-zhang} Desheng Li, Xiaoxian Zhang, \textit{On dynamical properties of general dynamical systems and differential inclusions}, J. Math. Anal. Appl. 274 (2002), 705--724.
\bibitem{desheng-li_yejuan-wang_suyun-wang} Desheng Li, Yejuan Wang, Suyun Wang, \textit{On the Dynamics of Nonautonomous General Dynamical Systems and Differential Inclusions}, Set-Valued Anal. 16 (2008), 651--671.
\bibitem{devaney} R.L. Devaney, \textit{An intoduction to chaotic dynamical systems}, Westview Press, Boulder, Colorado, 2nd ed., 2003.
\bibitem{gandolfo} G. Gandolfo, \textit{Economic Dynamics}, 4th ed., Springer-Verlag, Berlin-Heidelberg, 2009.
\bibitem{gyurkovics_meyer_takacs} \'{E}. Gyurkovics, D. Meyer, T. Tak\'{a}cs, \textit{Budget balancing in a two-dimensional macroeconomic model}, Math. Comput. Model. Dyn. Syst. 13 (2) (2007), 179--192. 
\bibitem{hamilton} J.D. Hamilton, \textit{A New Approach to the Economic Analysis of Nonstationary Time Series and the Business Cycle}, Econometrica 57 (2) (1989), 357--384.
\bibitem{hicks} J.R. Hicks, \textit{Mr. Keynes and the classics - a suggested interpretation}, Econometrica 5 (2) (1937), 147--159.
\bibitem{katok_hasselblatt} A. Katok, B. Hasselblatt, \textit{Introduction to the Modern Theory of Dynamical Systems}, Cambridge Universiry Press, New York, NY, 1995.
\bibitem{ke-ang-fu_chenglong-yu} Ke-Ang Fu, Chenglong Yu, \textit{On a two-dimensional risk model with time-dependent claim sizes and risky investments}, J. Comput. Appl. Math. 344 (2018), 367--380.
\bibitem{kennedy} J. A. Kennedy, \textit{Inverse Limits, Economics, and Backward Dynamics}, Rev. R. Acad. Cien. Serie A. Mat. 102 (1) (2008), 39--73.
\bibitem{kennedy_stockman} J. A. Kennedy, D. R. Stockman, \textit{Chaotic equilibria in models with backward dynamics}, J. Econ. Dyn. $\&$ Control 32 (2008), 939--955.
\bibitem{lima} L.S. Lima, \textit{Modeling of the financial market using the two-dimensional anisotropic Ising model}, Physica A 482 (2017), 544--551.
\bibitem{medio_raines} A. Medio, B. Raines, \textit{Backward dynamics in economics. The inverse limit approach}, J. Econ. Dyn. $\&$ Control 31 (2007), 1633--1671.
\bibitem{motohiro_yogo} Motohiro Yogo, \textit{Measuring business cycle: A wavelet analysis of economic time series}, Econ. Letters 100 (2008), 208--2012.
\bibitem{obukhovskii_kamenskii_kornev_yeong-cheng-liou} V. Obukhovskii, M. Kamenskii, S. Kornev, Yeong-Cheng Liou, \textit{On asymptotics of solutions for some classes of differential inclusions via the generalized guiding functions method}, J. Nonlinear Convex Anal. 18 (5) (2017), 967--975.
\bibitem{orlando} G. Orlando, \textit{A discrete mathematical model for chaotic dynamics in economics: Kaldor's model on business cycle}, Math. Comput. in Simul. 125 (2016), 83--98.
\bibitem{palimar_shankar} S. Palimar, B. R. Shankar, \textit{Backward dynamics, inverse limit theory and $p$-adic integers}, Adv. Stud. Contemp. Math. (Kyungshang) 20 (3) (2010), 433--436.
\bibitem{raines_stockman} B.R. Raines, D.R. Stockman, \textit{Fixed points imply chaos for a class of differential inclusions that arise in economic models}, Trans. American Math. Society 364 (5) (2012), 2479--2492. 
\bibitem{shi-hai-li} Shi Hai Li, \textit{$\omega$-chaos and topological entropy}, Trans. American Math. Society 339 (1) (1993), 243--249.
\bibitem{smirnov} G.V. Smirnov, \textit{Introduction to the Theory of Differential Inclusions}, Graduate Studies in Mathematics, volume 41, American Math. Society, Providence, Rhode Island, 2002.
\bibitem{stockman} D.R. Stockman, \textit{Chaos and capacity utilization under increasing returns to scale}, J. Econ. Behaviour $\&$ Organization 77 (2011), 147--162.
\bibitem{stockman_raines}  D.R. Stockman, B.R. Raines, \textit{Chaotic sets and Euler equation branching}, J. Math. Econ. 46 (2010), 1173--1193.
\bibitem{szydlowski_krawiec} M. Szydlowski, A. Krawiec, \textit{The Kaldor-Kalecki Model of Business Cycle as a Two-Dimensional Dynamical System}, J. Nonlinear Math. Physics 8:sup1 (2001), 266--271.
\bibitem{volna} B. Voln\'{a}, \textit{Existence of chaos in the plane $\mathbb{R}^2$ and its application in macroeconomics}, Appl. Math. Comput. 258 (2015), 237--266.
\bibitem{yokoo} M. Yokoo, \textit{Chaotic dynamics in a two-dimensional overlapping generations model}, J. Econ. Dyn. $\&$ Control 24 (2000), 909--934.

\end{thebibliography}
\end{document}